\theoremstyle{plain}
\newtheorem{theorem}{Theorem}[section]
\newtheorem{corollary}[theorem]{Corollary}
\newtheorem{lemma}[theorem]{Lemma}
\newcommand{\floor}[1]{\left\lfloor{#1}\right\rfloor}
\newcommand{\reel}{\mathbb{R}}
\newcommand{\comp}{\mathbb{C}}
\newcommand{\ds}{\displaystyle}
\newcommand{\Log}{{\rm Log}}
\newcommand{\Arg}{{\rm Arg}}
\newcommand{\abs}[1]{\left\vert #1\right\vert }
\title[The Sum of Certain Series Related to Harmonic Numbers ]
{The Sum of Certain Series Related to Harmonic Numbers}
\author{Omran Kouba}
\address{Department of Mathematics \\
Higher Institute for Applied Sciences and Technology\\
P.O. Box 31983, Damascus, Syria.}
\email{omran\_kouba@hiast.edu.sy}
\keywords{Harmonic numbers, Summation of series, Functional equations.}
\subjclass[2000]{11B83, 30D05, 65B10.}
\begin{document}
\date{\today}
\begin{abstract}
In this paper, we consider three families of numerical series with general terms containing 
the harmonic numbers, and we use simple methods from
classical and complex analysis to find explicit formul\ae  \ for their respective sums.
\end{abstract}
\smallskip\goodbreak

\maketitle

\section{\bf Introduction }\label{sec1}
\parindent=0pt
\quad Let $H_n=\sum_{j=1}^n1/j$ be the $n$th harmonic number. For a positive integer
$k$, let $S_k$, $T_k$ and $U_k$ denote, respectively, the sum of the following series :
\begin{align*}
S_k&=\sum_{n=1}^\infty(-1)^{n-1}(\log k-(H_{kn}-H_n)),\\
T_k&=\sum_{n=1}^\infty\frac{\log k-(H_{kn}-H_n)}{n},\\
U_k&=\sum_{n=1}^\infty(-1)^{n-1}\frac{H_{kn}}{n}.
\end{align*}

\quad The question of finding the value of  $T_k$  was asked by Ovidiu Furdui in \cite{fur}, and was answered by the present author\cite{kou1}. While evaluating
$S_k$ was the object of a problem posed by the author \cite{kou2}. 

\smallskip

\quad In this paper we will present a unified approach to determine these sums. It consists of finding an integral representation of each one of these sums, and then calculating the corresponding integral.

\smallskip

\quad The paper is organized as follows. In section~\ref{sec2}, we gathered some preliminary lemmas. Lemma \ref{lem21} and its corollaries are of interest in their own right. In section~\ref{sec3}, we find the statements and proofs of the main theorems.

\section{\bf Preleminaries }\label{sec2}

\quad In our first lemma, we prove that a certain complex function satisfies a simple funtional equation. This is the main tool in the 
proof of our results. Namely, Theorem \ref{th32} and Theorem \ref{th33}.

\begin{lemma}\label{lem21}
Let $\Omega=\comp\setminus[0,+\infty[$ that is the set of complex numbers 
with a cut along the set of nonnegative real numbers. For $z$ in $\Omega$ we define
$F(z)$ by
$$F(z)=\int_0^1\frac{\log(1-t)}{z-t}\, d t.$$
 Then $F$ satisfies the following functional equation
\begin{equation*}\label{E:dag}
\forall\, z\in\Omega,\quad F(z)+F\left(\frac1z\right)=\frac{\pi^2}{6}-\Log(1-z)~\Log\left(1-\frac1z\right)\tag{$\dag$}
\end{equation*}
where $\Log$ is the principal branch of the logarithm.
\end{lemma}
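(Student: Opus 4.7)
The plan is to introduce the auxiliary function
$$G(z) = F(z) + F(1/z) + \Log(1-z)\Log(1-1/z) - \frac{\pi^2}{6}$$
on $\Omega$, prove that it is holomorphic with vanishing derivative, and then pin down the resulting constant by a limiting argument.

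First, I would check that $G$ is a well-defined holomorphic function on $\Omega$. For $z\in\Omega$ and $t\in[0,1]$ the denominator $z-t$ never vanishes, so differentiation under the integral sign shows that $F$ is holomorphic on $\Omega$. Moreover $z\in\Omega$ implies $1/z\in\Omega$, so $F(1/z)$ is holomorphic as well. Since $z\notin[1,+\infty[$ and $1/z\notin\,]0,1]$, neither $1-z$ nor $1-1/z$ lies in $(-\infty,0]$, so the two principal logarithms are holomorphic on $\Omega$.

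Next, I would compute $F'(z)$ by integration by parts with $u=\log(1-t)$ and the antiderivative $v(t)=(1-t)/[(z-1)(z-t)]$ of $1/(z-t)$ chosen to vanish at $t=1$. The boundary contributions then disappear at both endpoints, leaving
$$F'(z) = \frac{1}{z-1}\int_0^1\frac{dt}{z-t} = -\frac{\Log(1-1/z)}{z-1},$$
where the second equality is obtained via the substitution $s=1-t/z$, whose image segment from $1$ to $1-1/z$ remains in $\comp\setminus(-\infty,0]$ for $z\in\Omega$. The chain rule then gives $\frac{d}{dz}F(1/z) = \Log(1-z)/[z(1-z)]$, while the product rule applied to $\Log(1-z)\Log(1-1/z)$ yields precisely the negatives of these two expressions. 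The cancellation gives $G'\equiv 0$, and since $\Omega$ is connected, $G$ is constant on $\Omega$.

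Finally, I would identify this constant by letting $z\to-\infty$ along the real axis. Dominated convergence yields $F(z)\to 0$ (the integrand is $O(1/|z|)$ uniformly in $t$) and $F(1/z)\to F(0^-)=-\int_0^1\log(1-t)/t\,dt=\pi^2/6$, while $\Log(1-z)\Log(1-1/z)\sim\log|z|\cdot(1/|z|)\to 0$. Hence the constant is zero, which is $(\dag)$. The main obstacle I anticipate is the careful bookkeeping of the principal branch across $\Omega$, notably justifying that the chain of identities leading to $F'(z)=-\Log(1-1/z)/(z-1)$ respects the cut uniformly in $z$. Once that is in hand, the relation $G'\equiv 0$ is a purely algebraic cancellation that does not involve any further branch subtleties.
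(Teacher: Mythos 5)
Your proof is correct and follows essentially the same route as the paper: both introduce the auxiliary function $G$, show $G'=0$ via the same integration-by-parts computation of $F'$ (you carry it out for complex $z$ directly, while the paper does it only for negative real $x$ and then invokes the identity theorem), and both evaluate the resulting constant at an end of the negative real axis (you at $z\to-\infty$, the paper at $x\to 0^-$, which are equivalent under the symmetry $z\mapsto 1/z$ of $G$). One cosmetic slip: your $v(t)=(1-t)/[(z-1)(z-t)]$ is a $t$-primitive of $-1/(z-t)^2$ (the $z$-derivative of the kernel), not of $1/(z-t)$, but the formula itself and the vanishing of the boundary terms are exactly what the computation requires, so nothing breaks.
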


\begin{proof}
Note first that both $F$ and $z\mapsto\Log(1-z)$ are holomorphic in the connected region $\Omega$, and since $\Omega$ is invariant under the holomorphic mapping $z\mapsto 1/z$, we conclude that
$$z\mapsto G(z)=
F(z)+F\left(\frac1z\right)+\Log(1-z)~\Log\left(1-\frac1z\right)
$$
is holomorphic in $\Omega$, so to prove the lemma, it is sufficient to prove that $G(x)=\pi^2/6$ for each negative real $x$. See [1, Ch.4, \$3.]\par
\quad Now for $x\in(-\infty,0)$ we have, (using integration by parts)
\begin{align*}
F^\prime(x)&=-\int_0^1\frac{\log(1-t)}{(t-x)^2}\, d t\\
&=\left[\left(\frac{1}{t-x}-\frac{1}{1-x}\right)\log(1-t)\right]_{t=0}^{t=1}
+\frac{1}{1-x}\int_0^1\frac{ d t}{t-x}\\
&=\frac{1}{1-x}\log\left(1-\frac{1}{x}\right),
\end{align*}
and one checks immediatly that $G^\prime(x)=0$ for every negative real $x$. This proves that, for some constant $c$, we have $G(x)=c$
for every $ x$ in the interval $(-\infty,0)$.   Letting $x$ tend to $0^-$, (and noting that $\lim_{x\to-\infty}F(x)=0$,) we conclude that 
\begin{align*}
c&=F(0)=\int_0^1\frac{-\log(1-t)}{t}\, d t
=\int_0^1\sum_{n=1}^\infty\frac{t^{n-1}}{n}\, d t\\
&=\sum_{n=1}^\infty\frac{1}{n}\int_0^1t^{n-1}\, d t=\sum_{n=1}^\infty\frac{1}{n^2}=\frac{\pi^2}{6}.
\end{align*}
This concludes the proof of the lemma.
\end{proof}

\quad Our first corollary is a formula, ``\`a la BBP  \cite{bbp}'', for $\pi^2$,  that allows the direct computation of binary of hexadecimal digits
of  $\pi^2$. See also \cite{exp}.

\begin{corollary}\label{cor21} If $(a_1,a_2,a_3,a_4,a_4,a_6,a_7)=(16,-16,-8,-16,-4,-4,2)$, then
$$\pi^2=\sum_{n=0}^\infty\frac{1}{16^n}\left(\sum_{r=1}^7\frac{a_r}{(8n+r)^2}\right).$$
\end{corollary}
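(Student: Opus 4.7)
My plan is to convert the double series into a single integral by means of $\frac{1}{m^2}=-\int_0^1 t^{m-1}\log t\,d t$, exploit an algebraic cancellation specific to the coefficients $(a_r)$, and then read off the two remaining integrals from Lemma \ref{lem21}.

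Setting $P(t)=\sum_{r=1}^7 a_r t^{r-1}$ and interchanging sum and integral (trivial by absolute convergence) gives
$$\sum_{n=0}^\infty\frac{1}{16^n}\sum_{r=1}^7\frac{a_r}{(8n+r)^2}=-\int_0^1\frac{16\,P(t)\,\log t}{16-t^8}\,d t.$$
The key observation, which I expect to be the main obstacle, is that for this particular choice of coefficients one has the factorization
$$P(t)=2(t^2+2)(t^2+2t+2)(t^2-4t+2),$$
as one verifies by checking that $\pm i\sqrt{2}$ and $-1\pm i$ annihilate $P$. Since $16-t^8=(2-t^2)(t^2+2)(t^2+2t+2)(t^2-2t+2)$, an enormous cancellation occurs, and a brief partial-fraction computation on the surviving factor $\dfrac{2(t^2-4t+2)}{(2-t^2)(t^2-2t+2)}$ yields
$$\frac{16\,P(t)}{16-t^8}=\frac{-32\,t}{2-t^2}+\frac{-32(t-1)}{(t-1)^2+1}.$$

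The target then splits as $\pi^2=I_1+I_2$ with
$$I_1=32\int_0^1\frac{t\log t}{2-t^2}\,d t,\qquad I_2=32\int_0^1\frac{(t-1)\log t}{(t-1)^2+1}\,d t.$$
For $I_1$, the substitutions $u=t^2$ then $v=1-u$ reduce it to $-8F(-1)$, and the functional equation $(\dag)$ at $z=-1$ immediately gives $2F(-1)=\frac{\pi^2}{6}-(\log 2)^2$, so that $I_1=-\frac{2\pi^2}{3}+4(\log 2)^2$. For $I_2$, the substitution $u=1-t$ combined with the partial fraction $\frac{u}{u^2+1}=-\frac12\bigl(\frac{1}{i-u}+\frac{1}{-i-u}\bigr)$ expresses it as $16(F(i)+F(-i))$; applying $(\dag)$ at $z=i$ and computing $\Log(1-i)\Log(1+i)=\frac{(\log 2)^2}{4}+\frac{\pi^2}{16}$ yields $I_2=\frac{5\pi^2}{3}-4(\log 2)^2$. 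Adding, the $(\log 2)^2$ terms cancel exactly and the $\pi^2$-coefficients combine to $1$, proving the identity.
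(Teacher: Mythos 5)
Your argument is correct, and it is essentially the paper's own proof run backwards: both come down to evaluating $F(-1)$ and $F(i)+F(-i)=2\Re F(i)$ from the functional equation \eqref{E:dag} and forming the combination $-8F(-1)+32\,\Re F(i)$, which is exactly the paper's ``$32$ times \eqref{E:eq2} plus $-8$ times \eqref{E:eq1}''. The only difference is how one verifies that the stated BBP sum equals this combination --- you write the sum as $-\int_0^1 16P(t)\log t/(16-t^8)\,dt$ and factor $P(t)=2(t^2+2)(t^2+2t+2)(t^2-4t+2)$ against $16-t^8$, whereas the paper expands the two special values of $F$ into the series $\sum 2^{-k}k^{-2}$ and $\sum\cos(k\pi/4)2^{-k/2}k^{-2}$ and splits them by residue classes mod $8$; both verifications are routine once the values $z=-1$ and $z=i$ have been chosen, and I have checked that your factorization, partial fractions, and the final cancellation of the $\log^2 2$ terms all hold.
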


\begin{proof}
Indeed, putting $z=-1$ in \eqref{E:dag} we find that
\begin{align*}
\frac{\pi^2}{12}-\frac{\log^22}{2}&=\int_0^1\frac{\log(1-t)}{-1-t}\,dt
=\frac{1}{2}\int_0^1\frac{-\log u}{1-u/2}\,du\\
&=\sum_{k=0}^\infty\frac{1}{2^{k+1}}\int_0^1u^k(-\log u)\, du=\sum_{k=1}^\infty\frac{1}{2^{k}k^2}
\end{align*}
Separating the above series into four series according to the value of $r=k\mod4$, we obtain
\begin{equation*}\label{E:eq1}
\frac{\pi^2}{12}-\frac{\log^22}{2}=\sum_{n=0}^\infty\frac{1}{2^{4n}}\left(\sum_{r=1}^4\frac{2^{2-r}}{(8n+2r)^2}\right).\tag{1}
\end{equation*}

Similarly, taking $z=i$ in \eqref{E:dag} we find that
\begin{align*}
\frac{\pi^2}{12}-\frac{1}{2}\abs{\Log(1+i)}^2&=\Re\left(\int_0^1\frac{\log(1-t)}{i-t}\,dt\right)
=\Re\left(\int_0^1\frac{-\log u}{1-i-u}\,du\right)\\
&=\Re\left(\sum_{k=0}^\infty\frac{1}{(1-i)^{k+1}}\int_0^1u^k(-\log u)\, du\right)=\sum_{k=1}^\infty\frac{\Re((1+i)^k)}{2^{k}k^2}
\end{align*}
That is
$$\frac{\pi^2}{12}-\frac{1}{2}\left(\frac{\log^22}{4}+\frac{\pi^2}{16}\right)
=\sum_{k=1}^\infty\frac{\cos(k\pi/4)}{2^{k/2}k^2}$$
and again, separating the above series according to the value of $r=k\mod8$, we obtain
\begin{equation*}\label{E:eq2}
\frac{5\pi^2}{96}-\frac{\log^22}{8}
=\sum_{n=0}^\infty\frac{1}{2^{4n}}\left(\sum_{r=1}^8\frac{2^{-r/2}\cos(r\pi/4)}{(8n+r)^2}\right).\tag{2}
\end{equation*}
And the desired formula follows by adding $32$ times \eqref{E:eq2} to $-8$ times \eqref{E:eq1}.
\end{proof}

\quad As we have seen before, we can use Lemma \ref{lem21} to evaluate many integrals, the following two corollaries illustrate this. Other applications are Theorem \ref{th32} and Theorem \ref{th33}.

\begin{corollary}\label{cor22} 
Let $\alpha$ be a real number from $[-1,1)$. Then 
$$\int_0^1\frac{(\alpha-t)\log(1-t)}{1-2\alpha t+t^2}\, dt
=\frac{\pi^2}{12}-\frac{(\arccos(\alpha)-\pi)^2}{8}-\frac18\log^2\left(2(1-\alpha)\right).$$
\end{corollary}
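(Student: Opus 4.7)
The plan is to specialize the functional equation $(\dag)$ of Lemma~\ref{lem21} to the point $z=e^{i\theta}$, where $\theta=\arccos\alpha\in(0,\pi]$ so that $\alpha=\cos\theta$. Since $\theta\neq 0$, both $z$ and $1/z=e^{-i\theta}=\bar z$ lie in $\Omega$, so the equation applies.

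First I would rewrite the left-hand side of $(\dag)$ as an integral involving the kernel appearing in the statement. Because $|z|=1$,
$$\frac{1}{z-t}+\frac{1}{\bar z - t}=\frac{2\,\mathrm{Re}(z)-2t}{|z-t|^2}=\frac{2(\alpha-t)}{1-2\alpha t + t^2},$$
so
$$F(e^{i\theta})+F(e^{-i\theta})=2\int_0^1\frac{(\alpha-t)\log(1-t)}{1-2\alpha t+t^2}\,dt.$$
This is the quantity we need to evaluate, up to the factor of $2$.

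Next I would compute $\Log(1-z)\Log(1-1/z)$ in closed form. The factorization
$$1-e^{i\theta}=-2i\sin(\theta/2)\,e^{i\theta/2}=2\sin(\theta/2)\,e^{i(\theta-\pi)/2}$$
is valid for $\theta\in(0,\pi]$, where $\sin(\theta/2)>0$ and $(\theta-\pi)/2\in(-\pi/2,0]$, so this expression is already in the form required by the principal branch. Since $4\sin^2(\theta/2)=2(1-\cos\theta)=2(1-\alpha)$, one obtains
$$\Log(1-e^{i\theta})=\tfrac12\log\bigl(2(1-\alpha)\bigr)+i\,\tfrac{\theta-\pi}{2}.$$
Conjugation gives $\Log(1-e^{-i\theta})=\overline{\Log(1-e^{i\theta})}$, so the product is simply $\abs{\Log(1-e^{i\theta})}^2$, namely
$$\Log(1-z)\Log(1-1/z)=\tfrac14\log^2\bigl(2(1-\alpha)\bigr)+\tfrac14(\arccos\alpha-\pi)^2.$$

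Finally I would plug both evaluations into $(\dag)$ and divide by $2$; the announced identity falls out immediately. The only delicate point is the bookkeeping for the principal branch, but for $\theta\in(0,\pi]$ the argument $(\theta-\pi)/2$ stays inside $(-\pi/2,0]$, which is comfortably within $(-\pi,\pi]$, so no branch-jump corrections are needed. The endpoint $\alpha=-1$ (i.e.\ $\theta=\pi$) is covered uniformly and recovers the computation already performed in the proof of Corollary~\ref{cor21}, providing a useful sanity check.
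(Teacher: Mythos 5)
Your proposal is correct and follows essentially the same route as the paper: specialize $(\dag)$ at $z=e^{i\theta}$ with $\theta=\arccos\alpha$, combine $F(e^{i\theta})+F(e^{-i\theta})$ into the real integral with kernel $\frac{2(\alpha-t)}{1-2\alpha t+t^2}$, and evaluate $\Log(1-e^{i\theta})$ via the factorization $1-e^{i\theta}=2\sin(\theta/2)e^{i(\theta-\pi)/2}$. The branch-of-logarithm bookkeeping you carry out is exactly the step the paper relies on as well.
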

\smallskip
\begin{proof}
 Let $\theta=\arccos(\alpha)\in(0,\pi]$. Using
\eqref{E:dag}, with $z=e^{i\theta}$ we conclude that
$$2\Re \left(F(e^{i\theta})\right)=\frac{\pi^2}{6}-\abs{\Log(1-e^{i\theta})}^2,$$
but 
\begin{align*}
1-e^{i\theta}= -2i\sin\left(\frac{\theta}{2}\right) e^{i\theta/2}= 2\sin\left(\frac{\theta}{2}\right) e^{i(\theta-\pi)/2},
\end{align*}
hence
\begin{align*}
\Log(1-e^{i\theta})&=\log\left(2\sin\left(\frac{\theta}{2}\right)\right)+i\frac{\theta-\pi}{2}\\
&=\frac12\log\left(2(1-\cos\theta)\right)+i\frac{\theta-\pi}{2}\\
&=\frac12\log\left(2(1-\alpha)\right)+i\frac{\arccos(\alpha)-\pi}{2}.
\end{align*}
\quad On the other hand, we have
\begin{align*}
2\Re \left(F(e^{i\theta})\right)&=\int_0^1\frac{\log(1-t)}{e^{i\theta}-t}\, d t+\int_0^1\frac{\log(1-t)}{e^{-i\theta}-t}\, d t\\
&=\int_0^1\frac{2(\alpha-t)\log(1-t)}{1-2\alpha t+t^2}\, dt.
\end{align*}
It follows that
$$\int_0^1\frac{(\alpha-t)\log(1-t)}{1-2\alpha t+t^2}\, dt
=\frac{\pi^2}{12}-\frac{(\arccos(\alpha)-\pi)^2}{8}-\frac18\log^2\left(2(1-\alpha)\right),$$
which is the desired conclusion.
\end{proof}

{\bf Examples.}  In particular, choosing $\alpha\in\{-1/2,0,1/2\}$, we find that
\begin{align*}
\int_0^1\frac{(1+2t)\log(1-t)}{1+ t+t^2}\, dt
&=-\frac{5\pi^2}{36}+\frac14\log^23,\\
\int_0^1\frac{t\log(1-t)}{1+t^2}\, dt
&=-\frac{5\pi^2}{96}+\frac18\log^22,\\
\int_0^1\frac{(1-2t)\log(1-t)}{1-t+t^2}\, dt
&=\frac{\pi^2}{18}.
\end{align*}

\smallskip
\quad The following corollary is a generalization of Corollary \ref{cor22}.

\begin{corollary}\label{cor23} 
Let $P(X)$ be a real polynomial of degree $n$, and let $\{a_1,\ldots ,a_n\}$ be the roots of $P$, each one is repeated according to its multiplicity.
  Assume that the roots of $P$ belong to ${\mathcal U}'=\{z\in\comp:\abs{z}=1, z\ne1\}$. Then
$$\int_0^1\frac{P'(t)}{P(t)}\log(1-t)\, dt
=-\frac{n\pi^2}{12}+\frac12\sum_{j=1}^n\log^2\abs{1-a_j}+\frac12\sum_{j=1}^n(\Arg(1-a_j))^2,$$
where $\Arg$ is the principal determination of the argument, {\it i.e.} the one that belongs to $(-\pi,\pi)$.
\end{corollary}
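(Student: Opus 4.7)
The plan is to reduce this to $n$ applications of the functional equation~\eqref{E:dag}, one for each root, and then exploit the pairing of conjugate roots that arises from $P$ being real.

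First I would use the partial fraction decomposition
$$\frac{P'(t)}{P(t)}=\sum_{j=1}^n\frac{1}{t-a_j},$$
so that the integral to evaluate splits as
$$\int_0^1\frac{P'(t)}{P(t)}\log(1-t)\,dt=\sum_{j=1}^n\int_0^1\frac{\log(1-t)}{t-a_j}\,dt=-\sum_{j=1}^n F(a_j),$$
where $F$ is the function from Lemma \ref{lem21}. (Note that each $a_j\in\mathcal{U}'$ lies in $\Omega$, so this makes sense.) The task is therefore to compute $\sum_{j=1}^n F(a_j)$.

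The key geometric observation is that for $a_j$ on the unit circle we have $1/a_j=\overline{a_j}$, and since $P$ is a real polynomial the map $a\mapsto\overline{a}$ permutes the multiset $\{a_1,\ldots,a_n\}$ of roots. Consequently,
$$2\sum_{j=1}^nF(a_j)=\sum_{j=1}^n\bigl(F(a_j)+F(\overline{a_j})\bigr)=\sum_{j=1}^n\left(F(a_j)+F\!\left(\frac{1}{a_j}\right)\right),$$
and now each summand can be evaluated by \eqref{E:dag}:
$$2\sum_{j=1}^nF(a_j)=\frac{n\pi^2}{6}-\sum_{j=1}^n\Log(1-a_j)\,\Log\!\left(1-\frac{1}{a_j}\right).$$

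Next I would simplify the remaining logarithmic terms. Since $a_j\in\mathcal{U}'$ implies $\Re(1-a_j)\ge 0$ with equality only when $a_j=\pm i$ (never negative) and $1-a_j\ne 0$, the point $1-a_j$ avoids the branch cut of $\Log$, so $\Log(1-\overline{a_j})=\overline{\Log(1-a_j)}$. Therefore
$$\Log(1-a_j)\,\Log\!\left(1-\frac{1}{a_j}\right)=\Log(1-a_j)\,\overline{\Log(1-a_j)}=\bigl|\Log(1-a_j)\bigr|^2,$$
which in terms of modulus and argument equals $\log^2|1-a_j|+(\Arg(1-a_j))^2$. Substituting back, dividing by $2$, and inserting into the expression $-\sum_jF(a_j)$ yields precisely the claimed formula.

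The only delicate point is the justification that $\Log(1-1/a_j)=\overline{\Log(1-a_j)}$; this is where it matters that $a_j\ne 1$ (to avoid the singularity) and that $1-a_j$ stays in the right half-plane (so the principal logarithm genuinely behaves well under complex conjugation). Everything else is a direct, mechanical combination of linearity, the pairing $a\leftrightarrow\overline{a}$ forced by reality of $P$, and the identity from Lemma \ref{lem21}.
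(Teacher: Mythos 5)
Your proof is correct and follows essentially the same route as the paper: partial fractions for $P'/P$, the pairing $a_j\leftrightarrow \overline{a_j}=1/a_j$ forced by the reality of $P$ and $\abs{a_j}=1$, and the functional equation of Lemma~\ref{lem21} followed by $\Log(1-\overline{a_j})=\overline{\Log(1-a_j)}$. (One harmless slip in your justification of that last identity: $\Re(1-a_j)=1-\Re(a_j)$ vanishes only for $a_j=1$, not for $a_j=\pm i$, so in fact $1-a_j$ lies in the \emph{open} right half-plane for every admissible root, which makes the argument even cleaner.)
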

\smallskip
\begin{proof}
Indeed, since $P$ is real we have $P(X)=\overline{P(X)}$. Hence, there is a nonzero real $\lambda$ such that
$$P(X)=\lambda\prod_{j=1}^n(X-a_j)=\lambda\prod_{j=1}^n\left(X-1/a_j\right).$$
Therefore,
$$\frac{P'(X)}{P(X)}=\sum_{j=1}^n\frac{1}{X-a_j}=\sum_{j=1}^n\frac{1}{X-1/a_j}.$$
It follows that
\begin{align*}
2\int_0^1\frac{P'(t)}{P(t)}\log(1-t)\, dt&=\int_0^1\left(\sum_{j=1}^n\frac{\log(1-t)}{t-a_j}+\sum_{j=1}^n\frac{\log(1-t)}{t-1/a_j}\right)\, dt\\
&=-\sum_{j=1}^n\left(\int_0^1\frac{\log(1-t)}{a_j-t}\, dt+\int_0^1\frac{\log(1-t)}{(1/a_j)-t}\, dt\right)\\
&=-\sum_{j=1}^n\left(F(a_j)+F\left(\frac{1}{a_j}\right)\right)\\
&=-\sum_{j=1}^n\left(\frac{\pi^2}{6}-\Log(1-a_j)\Log(1-1/a_j)\right)\\
&=-\frac{n\pi^2}{6}+\sum_{j=1}^n\Log(1-a_j)\Log(1-\overline{a_j})\\
&=-\frac{n\pi^2}{6}+\sum_{j=1}^n\abs{\Log(1-a_j)}^2\\
&=-\frac{n\pi^2}{6}+\sum_{j=1}^n\log^2\abs{1-a_j}+\sum_{j=1}^n(\Arg(1-a_j))^2
\end{align*}
This concludes the proof of the corollary.
\end{proof}

\quad We invite the reader to discover other applications of Lemma \ref{lem21}. In the next lemma we find an integral representation of  the quantity  $\log k-(H_{kn}-H_n)$, and this will help us in the task of summing the series under consideration.

\begin{lemma}\label{lem24}
Let $n$ and $k$ be integers such that $n\geq 1$ and $k\geq2$. Then
\begin{equation*}
\log k-(H_{kn}-H_n)=\int_0^1\frac{Q'_k(t)}{Q_k(t)}t^{nk}\, dt,
\end{equation*}
where $Q_k$ is the polynomial $Q_k(t)=1+t+\cdots+t^{k-1}$.
\end{lemma}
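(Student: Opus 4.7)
The plan is to perform a single integration by parts and then reduce what remains to the classical beta-type identity $\int_0^1 t^{m-1}\log(1-t)\,dt=-H_m/m$. Writing $\frac{Q_k'(t)}{Q_k(t)}=(\log Q_k(t))'$ and noting that $Q_k$ is continuous and strictly positive on $[0,1]$ (with $Q_k(0)=1$ and $Q_k(1)=k$), the function $\log Q_k$ is continuous on $[0,1]$, so integration by parts with $u=t^{nk}$ and $dv=(\log Q_k)'\,dt$ is legitimate. The boundary term is $[t^{nk}\log Q_k(t)]_0^1=\log k$, leaving the identity
$$\int_0^1\frac{Q_k'(t)}{Q_k(t)}t^{nk}\,dt=\log k-nk\int_0^1 t^{nk-1}\log Q_k(t)\,dt,$$
so the task reduces to showing that the remaining integral equals $(H_{kn}-H_n)/(nk)$.

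For this, I would use the factorization $Q_k(t)=(1-t^k)/(1-t)$, valid for $t\in[0,1)$, to split
$$\int_0^1 t^{nk-1}\log Q_k(t)\,dt=\int_0^1 t^{nk-1}\log(1-t^k)\,dt-\int_0^1 t^{nk-1}\log(1-t)\,dt,$$
each piece being an absolutely convergent integral with only a mild logarithmic singularity at $t=1$. The second integral is $-H_{nk}/(nk)$ by the classical identity, which one can prove in one line by expanding $\log(1-t)=-\sum_{j\ge1}t^j/j$, integrating termwise, and collapsing the telescoping sum $\sum_{j\ge1}\frac{1}{j(nk+j)}=\frac{H_{nk}}{nk}$.

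For the first integral, I would perform the substitution $s=t^k$, which converts $t^{nk-1}\,dt$ into $\frac{1}{k}s^{n-1}\,ds$ and turns the integral into $\frac{1}{k}\int_0^1 s^{n-1}\log(1-s)\,ds=-H_n/(nk)$ by the same identity with $m=n$. Combining the two pieces gives $\int_0^1 t^{nk-1}\log Q_k(t)\,dt=(H_{kn}-H_n)/(nk)$, and substituting back yields the claim.

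There is no real obstacle here; the only mild care needed is to observe that, although the factored form of $\log Q_k$ splits the integrand as a difference of two terms each singular at $t=1$, both integrals are individually convergent since the singularity is only logarithmic and $nk\ge1$, so the splitting and the change of variable are both unconditionally valid.
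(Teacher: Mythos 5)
Your proof is correct, but it takes a genuinely different route from the paper. The paper never integrates by parts: it uses $(1-t)Q_k'(t)=Q_k(t)-kt^{k-1}$ to show that
$$\frac{Q_k'(t)}{Q_k(t)}\bigl(1-t^{nk}\bigr)=\sum_{j=1}^{nk}t^{j-1}-k\sum_{\ell=1}^{n}t^{k\ell-1}$$
is a polynomial, writes the integrand as $\frac{Q_k'}{Q_k}-\frac{Q_k'}{Q_k}(1-t^{nk})$, and integrates: the first piece gives $\log k$ as $[\log Q_k]_0^1$ and the polynomial gives $H_{kn}-H_n$ termwise, with no improper integrals or infinite series anywhere. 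You instead integrate by parts to isolate the boundary term $\log k$, split $\log Q_k=\log(1-t^k)-\log(1-t)$, and invoke the classical identity $\int_0^1 t^{m-1}\log(1-t)\,dt=-H_m/m$ twice (once after the substitution $s=t^k$). Both arguments are sound; your version is arguably more conceptual in that it exposes the role of the standard log-beta identity, while the paper's is more self-contained and entirely finitary, requiring only the integration of polynomials plus one evaluation of $\log Q_k$ at the endpoints. Your remarks on convergence (the logarithmic singularities at $t=1$ being integrable, justifying the splitting and the termwise integration of a series with terms of constant sign) are exactly the points that need saying in your approach and are correctly handled.
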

\begin{proof}

Since $(1-t)Q_k(t)=1-t^k$ we have  $(1-t)Q^\prime_k(t)=Q_k(t)-kt^{k-1}$, and consequently, for $n\geq1$ and $t\in(0,1)$, 
\begin{align*}
\frac{Q^\prime_k(t)}{Q_k(t)}(1-t^{nk})
&=(1-t)Q^\prime_k(t)\frac{1-t^{nk}}{1-t^k}\\
&=\frac{1-t^{nk}}{1-t^k}(Q_k-kt^{k-1}),
\end{align*}
that is
\begin{align*}
\frac{Q^\prime_k(t)}{Q_k(t)}(1-t^{nk})& =\left(1+t^k+t^{2k}+\cdots+t^{(n-1)k}\right)\left(
1+t+t^{2}+\cdots+t^{k-1}-kt^{k-1}\right)\\
&=\sum_{j=1}^{nk}t^{j-1}-k\sum_{\ell=1}^{n}t^{k\ell-1}.
\end{align*}
We conclude, that
\begin{align*}
\int_0^1\frac{Q^\prime_k(t)}{Q_k(t)}\,t^{nk} d t
&=\int_0^1\frac{Q^\prime_k(t)}{Q_k(t)}\, d t-\sum_{j=1}^{nk}\int_0^1t^{j-1} d t+k
\sum_{\ell=1}^{n}\int_0^1t^{k\ell-1} d t\\
&=\log k-H_{kn}+H_n.
\end{align*}
This ends the proof of the lemma.
\end{proof}

\quad In the next lemma we find an integral representation of  the quantity  $H_n/n$, which is useful is summing many series containing similar
expressions. In particular, it will be used in the proof of Theorem \ref{th33}.
\smallskip

\begin{lemma}\label{lem25}
Let $n$ be an integers such that $n\geq 1$. Then
\begin{equation*}
\frac{H_{n}}{n}=\int_\Delta\frac{y^{n-1}}{1-x}\, dx\,dy
\end{equation*}
where $\Delta=\{(x,y)\in\reel^2:0\leq x<y\leq 1\}$.
\end{lemma}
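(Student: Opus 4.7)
The plan is to compute the double integral directly by Fubini, choosing the order that avoids infinite series. I would fix $x\in[0,1)$ and let $y$ run from $x$ to $1$. This gives
$$\int_\Delta \frac{y^{n-1}}{1-x}\,dx\,dy = \int_0^1\frac{1}{1-x}\left(\int_x^1 y^{n-1}\,dy\right)dx = \int_0^1 \frac{1-x^n}{n(1-x)}\,dx.$$
Then I would use the factorization $(1-x^n)/(1-x)=1+x+\cdots+x^{n-1}$ to rewrite the integrand as a polynomial, so that termwise integration yields
$$\frac{1}{n}\int_0^1\left(\sum_{j=0}^{n-1}x^j\right)dx = \frac{1}{n}\sum_{j=0}^{n-1}\frac{1}{j+1} = \frac{H_n}{n},$$
which is exactly what we want.

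The only issue worth verifying is the legitimacy of Fubini: the integrand $y^{n-1}/(1-x)$ is nonnegative on $\Delta$, and the iterated integral we just computed is finite, so Tonelli's theorem justifies the exchange (and in fact the order chosen removes any singularity, since the factor $(1-x^n)$ absorbs the pole of $1/(1-x)$ at $x=1$). This is really the only ``obstacle,'' and it is mild.

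As an alternative one could instead integrate in $x$ first, obtaining $-\log(1-y)$, and then expand $-\log(1-y)=\sum_{k\ge1}y^k/k$ to reach $\sum_{k\ge1}1/(k(n+k))=H_n/n$ via the partial fraction $1/(k(n+k))=\frac{1}{n}(1/k-1/(n+k))$ and a telescoping argument. This works equally well, but is longer, so I would favor the first route in the written proof.
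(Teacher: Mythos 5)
Your proposal is correct and follows essentially the same route as the paper: integrate in $y$ first to get $\int_0^1\frac{1-x^n}{n(1-x)}\,dx$, expand the integrand as $1+x+\cdots+x^{n-1}$, and integrate term by term to obtain $H_n/n$. The remark on Tonelli and the alternative order of integration are fine but not needed beyond what the paper already does.
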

\begin{proof}
This is easy. Indeed
\begin{align*}
\int_\Delta\frac{y^{n-1}}{1-x}\, dy\, dx, &
=\int_{x=0}^1\frac{1}{1-x}\left(\int_{y=x}^1y^{n-1}\,dy\right)\,dx\\
&=\int_{0}^1\frac{1-x^n}{n(1-x)}\,dx=\frac{1}{n}\int_{0}^1\bigg(\sum_{j=1}^nx^{j-1}\bigg)\,dx\\
&=\frac{1}{n}\sum_{j=1}^n\int_{0}^1x^{j-1}\,dx=\frac{H_n}{n}
\end{align*}
which is the desired conclusion.
\end{proof}

\smallskip

\section{\bf The Main Results }\label{sec3}

\smallskip

\quad The evaluation of the sum of the first of our three series, does not use Lemma \ref{lem21}, so it is the ``easiest'' one.

\smallskip\goodbreak

\begin{theorem}\label{th31} For an integer $k\geq 2$, let $S_k$ be defined by
$$
S_k=\sum_{n=1}^\infty(-1)^{n-1}(\log k-(H_{kn}-H_n)),
$$
then,
$$S_k
=\frac{k-1}{2k}\log2+\frac{1}{2}\log k-
\frac{\pi}{2k^2}\sum_{\ell=1}^{\floor{k/2}}(k+1-2\ell)\cot\left(\frac{(2\ell-1)\pi}{2k}\right).$$
\end{theorem}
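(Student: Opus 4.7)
My plan is to use Lemma \ref{lem24} to convert the series into a definite integral, then evaluate that integral by partial fractions involving the $k$-th roots of $-1$.

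By Lemma \ref{lem24} together with the geometric sum $\sum_{n=1}^\infty(-1)^{n-1}t^{nk}=t^k/(1+t^k)$, dominated convergence (valid since $Q_k'/Q_k$ is bounded on $[0,1]$) gives
\begin{equation*}
S_k=\int_0^1\frac{Q_k'(t)}{Q_k(t)}\cdot\frac{t^k}{1+t^k}\,dt=\log k-J_k,\quad J_k:=\int_0^1\frac{Q_k'(t)}{Q_k(t)(1+t^k)}\,dt.
\end{equation*}
Noting that $Q_k(t)(1+t^k)=Q_{2k}(t)$, one checks directly the partial-fraction identity $\frac{1}{Q_k(t)(1+t^k)}=\frac{1}{2Q_k(t)}+\frac{1-t}{2(1+t^k)}$. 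Combining this with $(1-t)Q_k'(t)=Q_k(t)-kt^{k-1}$ and the elementary $\int_0^1 kt^{k-1}/(1+t^k)\,dt=\log 2$ reduces the whole computation to $I_k:=\int_0^1 Q_k(t)/(1+t^k)\,dt$, through the relation $J_k=\tfrac{1}{2}(\log k-\log 2+I_k)$.

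I would evaluate $I_k$ via partial fractions over the roots $\zeta_\ell=e^{i(2\ell-1)\pi/k}$ of $1+t^k$. Using $\zeta_\ell^k=-1$ and $Q_k(\zeta_\ell)=2/(1-\zeta_\ell)$, one finds $\frac{Q_k(t)}{1+t^k}=-\frac{2}{k}\sum_{\ell=1}^k\frac{\zeta_\ell}{(1-\zeta_\ell)(t-\zeta_\ell)}$. A polar-form calculation then yields, with $\alpha_\ell=(2\ell-1)\pi/(2k)$,
\begin{equation*}
\frac{\zeta_\ell}{1-\zeta_\ell}=-\tfrac{1}{2}+\tfrac{i}{2}\cot\alpha_\ell,\quad\int_0^1\frac{dt}{t-\zeta_\ell}=\log(2\sin\alpha_\ell)+i\frac{\pi(k+1-2\ell)}{2k},
\end{equation*}
using the principal branch $\Log$. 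Summing, the imaginary parts of the individual terms cancel by the involution $\ell\leftrightarrow k+1-\ell$ (and by $\sum_\ell(k+1-2\ell)=0$), while the product formula $\prod_{\ell=1}^k\sin\alpha_\ell=2^{1-k}$ gives $\sum_\ell\log(2\sin\alpha_\ell)=\log 2$. Thus
\begin{equation*}
I_k=\frac{\log 2}{k}+\frac{\pi}{2k^2}\sum_{\ell=1}^k(k+1-2\ell)\cot\alpha_\ell.
\end{equation*}

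Inserting this into $S_k=\log k-J_k=\tfrac12\log k+\tfrac12\log 2-\tfrac12 I_k$ and applying the same involution $\ell\leftrightarrow k+1-\ell$ (which leaves the summand $(k+1-2\ell)\cot\alpha_\ell$ invariant and vanishes at its fixed point $\ell=(k+1)/2$ when $k$ is odd) to fold the sum into $2\sum_{\ell=1}^{\floor{k/2}}$, yields the announced formula. The delicate point is the third step: one must keep track of the correct principal arguments for $1-\zeta_\ell$ and $-\zeta_\ell$ so that the closed form for $\int_0^1 dt/(t-\zeta_\ell)$ is correct; once this is done, everything else is organized bookkeeping.
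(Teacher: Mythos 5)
Your proposal is correct, and its overall skeleton is the same as the paper's: Lemma \ref{lem24} plus summation of the alternating geometric series gives $S_k=\int_0^1\frac{Q_k'(t)}{Q_k(t)}\cdot\frac{t^k}{1+t^k}\,dt$, this is reduced to the identity $S_k=\tfrac12\log(2k)-\tfrac12\int_0^1\frac{Q_k(t)}{1+t^k}\,dt$, and the remaining integral is computed by partial fractions over the roots of $1+t^k$ followed by the folding symmetry $\ell\leftrightarrow k+1-\ell$. The one place you genuinely diverge is the middle reduction: the paper splits the integrand of $\int_0^x$ into pieces with cancelling logarithmic singularities at $t=1$ and takes the limit $x\to1^-$ of $\tfrac12\log\bigl((1-x^{2k})/(1-x)\bigr)$, whereas you first peel off $\int_0^1 Q_k'/Q_k=\log k$ and then use the exact identity $\frac{1}{Q_k(t)(1+t^k)}=\frac{1}{2Q_k(t)}+\frac{1-t}{2(1+t^k)}$ together with $(1-t)Q_k'=Q_k-kt^{k-1}$; this keeps every intermediate integral convergent and is arguably a bit cleaner, at the cost of having to guess the partial-fraction identity. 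I checked the details of your residue computation, the value $\int_0^1\frac{dt}{t-\zeta_\ell}=\log(2\sin\alpha_\ell)+i\frac{\pi(k+1-2\ell)}{2k}$ (the horizontal segment from $-\zeta_\ell$ to $1-\zeta_\ell$ never meets the cut of $\Log$, so the principal branch is a valid antiderivative), and the product formula $\prod_{\ell=1}^k 2\sin\alpha_\ell=2$; all are correct, and the final bookkeeping reproduces the stated formula.
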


\begin{proof}
Using Lemma \ref{lem24} we have
\begin{equation*}
\log k-(H_{kn}-H_n)=\int_0^1\frac{Q'_k(t)}{Q_k(t)}t^{nk}\, dt,
\end{equation*}
where $Q_k$ is the polynomial $Q_k(t)=1+t+\cdots+t^{k-1}$. Now, for $m>1$ we have
$$\sum_{n=1}^{m-1}(-1)^{n-1}(\log k-H_{nk}+H_n)
=\int_0^1\frac{Q^\prime_k(t)}{Q_k(t)}\cdot\frac{t^k-(-t^k)^m}{1+t^k}\, d t,$$
so that
\begin{align*}
\abs{\sum_{n=1}^{m-1}(-1)^{n-1}(\log k-H_{nk}+H_n)
-\int_0^1\frac{Q^\prime_k(t)}{Q_k(t)}\cdot\frac{t^k}{1+t^k}\, d t}
& =\int_0^1\frac{Q^\prime_k(t)}{Q_k(t)}\cdot\frac{t^{km}}{1+t^k}\, d t\\
\leq M_k & \int_0^1t^{km}\, d t=\frac{M_k}{km+1},
\end{align*}
where $M_k=\sup_{t\in[0,1]}\frac{Q^\prime_k(t)}{(1+t^k)Q_k(t)}$.
This proves the convergence of the series defining $S_k$, and proves also that
\begin{equation*}\label{E:eq3}
S_k=\int_0^1\frac{Q^\prime_k(t)}{Q_k(t)}\cdot\frac{t^k}{1+t^k}\, d t.\tag{3}
\end{equation*}
But, we aleady noted that  $(1-t)Q_k(t)=1-t^k$, so
$$\frac{Q^\prime_k(t)}{Q_k(t)}=\frac{1}{1-t}-\frac{kt^{k-1}}{1-t^k},$$
and we can write \eqref{E:eq3} as follows,
\begin{equation*}\label{E:eq4}
S_k=\int_0^1\left(\frac{1}{1-t}-\frac{kt^{k-1}}{1-t^k}\right)\frac{t^k}{1+t^k}\, d t.\tag{4}
\end{equation*}

\smallskip
For $x\in[0,1)$ we have
\begin{align*}
\int_0^x\left(\frac{1}{1-t}-\frac{kt^{k-1}}{1-t^k}\right)\frac{t^k}{1+t^k}\,d t
=&\int_0^x\frac{t^k}{(1-t)(1+t^k)}\, d t-\int_0^x\frac{kt^{2k-1}}{1-t^{2k}}\, d t\\
=&\int_0^x\frac{t^k-1}{2(1-t)(1+t^k)}\, d t+\int_0^x\frac{t^k+1}{2(1-t)(1+t^k)}\, d t\\
&\qquad -\int_0^x\frac{kt^{2k-1}}{1-t^{2k}}\, d t\\
=&-\frac12\int_0^x\frac{Q_k(t)}{1+t^k}\, d t-\frac{\log(1-x)}{2}
+\frac{\log(1-x^{2k})}{2}\\
=&-\frac12\int_0^x\frac{Q_k(t)}{1+t^k}\, d t
+\frac12\log\left(\frac{1-x^{2k}}{1-x}\right),
\end{align*}
so, taking the limit as $x$ approaches 1, we see that \eqref{E:eq4}  can be written as follows
\begin{equation*}\label{E:eq5}
S_k=\frac{\log(2k)}{2}-\frac12\int_0^1\frac{Q_k(t)}{1+t^k}\, d t.\tag{5}
\end{equation*}
Now, if $\omega=\omega_k=\exp(i\pi/k)$ then $1+t^k=\prod_{j=0}^{k-1}(1-\omega^{2j+1}t)$, consequently
$$\frac{Q_k(t)}{1+t^k}=\sum_{j=0}^{k-1}\frac{\lambda_j}{1-\omega^{2j+1}t},$$
with
\begin{align*}
\lambda_j&=\lim_{z\to\overline{\omega}^{2j+1}}\frac{(1-\omega^{2j+1}z)Q_k(z)}{1+z^k}
=\frac{-\omega^{2j+1}Q_k(\overline{\omega}^{2j+1})}{k(\overline{\omega}^{2j+1})^{k-1}}\\
&=\frac{1}{k}Q_k(\overline{\omega}^{2j+1})
=\frac{1}{k}\cdot\frac{1-(\overline{\omega}^{2j+1})^k}{1-\overline{\omega}^{2j+1}}\\
&=\frac{2}{k(1-\overline{\omega}^{2j+1})},
\end{align*}
hence,
$$\frac{Q_k(t)}{1+t^k}=\frac{2}{k}\sum_{j=0}^{k-1}\frac{1}{1-\omega^{2j+1}}\cdot
\frac{-\omega^{2j+1}}{1-\omega^{2j+1}t}.$$
\quad Clearly,
$t\mapsto \Log(1-\omega^{2j+1}t)$ is a primitive of 
$\ds t\mapsto \frac{-\omega^{2j+1}}{1-\omega^{2j+1}t}$ on the interval $[0,1]$, consequently
\begin{equation*}
\int_0^1\frac{Q_k(t)}{1+t^k}\, d t
=\frac{2}{k}\sum_{j=0}^{k-1}\frac{\Log(1-\omega^{2j+1})}{1-\omega^{2j+1}},
\end{equation*}
and since the left side of this formula is real, we conclude that
\begin{equation*}
\int_0^1\frac{Q_k(t)}{1+t^k}\, d t
=\frac{2}{k}\sum_{j=0}^{k-1}\Re\left(\frac{\Log(1-\omega^{2j+1})}{1-\omega^{2j+1}}\right).
\end{equation*}
But, 
\begin{align*}
1-\omega^{2j+1}&=\exp\left(\frac{\pi(2j+1)i}{2k}\right)(-2i)\sin\left(\frac{\pi(2j+1)}{2k}\right)\\
&=2\sin\left(\frac{\pi(2j+1)}{2k}\right)\exp\left(\frac{i\pi}{2}(\frac{2j+1}{k}-1)\right),
\end{align*}
so
\begin{align*}
\Log(1-\omega^{2j+1})&=\log\abs{1-\omega^{2j+1}}+i \frac{\pi}{2}\left(\frac{2j+1}{k}-1\right),\\
\frac{1}{1-\omega^{2j+1}}&=\frac{1}{2}+\frac{i}{2}\cot\left(\frac{(2j+1)\pi}{2k}\right),
\end{align*}
therefore, we can write $(6)$ as follows :
$$\int_0^1\frac{Q_k(t)}{1+t^k}\, d t
=\frac{1}{k}\log\left(\prod_{j=0}^{k-1}\abs{1-\omega^{2j+1}}\right)
-\frac{\pi}{2k^2}\sum_{j=0}^{k-1}(2j+1-k)\cot\left(\frac{(2j+1)\pi}{2k}\right)$$
From $1+t^k=\prod_{j=0}^{k-1}(1-\omega^{2j+1}t)$ we conclude that
$$\prod_{j=0}^{k-1}\abs{1-\omega^{2j+1}}=\abs{\prod_{j=0}^{k-1}(1-\omega^{2j+1})}=2,$$
 so
$$\int_0^1\frac{Q_k(t)}{1+t^k}\, d t
=\frac{\log2}{k}
-\frac{\pi}{2k^2}\sum_{j=0}^{k-1}(2j+1-k)\cot\left(\frac{(2j+1)\pi}{2k}\right).$$
Finally, since replacing $j$ by $k-1-j$ does not change the summand in the above sum, we obtain
$$\int_0^1\frac{Q_k(t)}{1+t^k}\, d t
=\frac{\log2}{k}
+\frac{\pi}{k^2}\sum_{0\leq j<(k-1)/2}(k-1-2j)\cot\left(\frac{(2j+1)\pi}{2k}\right)$$
and the desired conclusion follows from \eqref{E:eq5} :
\begin{equation*}
S_k=\frac{\log(2k)}{2}-\frac{\log2}{2k}
-\frac{\pi}{2k^2}\sum_{0\leq j<(k-1)/2}(k-1-2j)\cot\left(\frac{(2j+1)\pi}{2k}\right),
\end{equation*}
which is equivalent to the statement of the theorem. This concludes the proof.
\end{proof}

{\bf Examples.}  In particular, we have
\begin{align*}
\sum_{n=1}^\infty(-1)^{n-1}(\log2-H_{2n}+H_n)&=\frac{3}{4}\log2-\frac{\pi}{8},\\
\sum_{n=1}^\infty(-1)^{n-1}(\log3-H_{3n}+H_n)&=
\frac{1}{3}\log2+\frac{1}{2}\log 3-\frac{\pi}{3\sqrt{3}},\\
\sum_{n=1}^\infty(-1)^{n-1}(\log4-H_{4n}+H_n)&=
\frac{11}{8}\log2-
(1+2\sqrt2)\frac{\pi}{16}.
\end{align*}
Subtracting  the last one from twice the first, we obtain
$$\sum_{n=1}^\infty(-1)^{n-1}( H_{4n}+H_n-2H_{2n})=\frac{1}{8}\log2-(3-2\sqrt2)\frac{\pi}{16},$$
and this can be rearranged to give
$$\sum_{n=1}^\infty\left(\sum_{k=1}^{2n}\frac{(-1)^{k+n}}{k+2n}\right)=\frac{1}{8}\log2-(3-2\sqrt2)\frac{\pi}{16}.$$

\smallskip

\begin{theorem}\label{th32} For an integer $k\geq 2$, let $T_k$ be defined by
$$
T_k=\sum_{n=1}^\infty\frac{\log k-(H_{kn}-H_n)}{n},
$$
then,
$$T_k=
\frac{(k-1)(k+2)}{24k}\,\pi^2-\frac12\log^2k-\frac12\sum_{j=1}^{k-1}\log^2\left(2\sin\frac{j\pi}{k}\right).$$
\end{theorem}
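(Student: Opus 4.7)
The plan is to reduce $T_k$ to an integral using Lemma~\ref{lem24}, split that integral into a piece to which Corollary~\ref{cor23} applies directly and a trivial piece, and then finish by elementary trigonometric bookkeeping.

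First, I would invoke Lemma~\ref{lem24} termwise:
$$T_k = \sum_{n=1}^\infty \frac{1}{n}\int_0^1 \frac{Q'_k(t)}{Q_k(t)}\, t^{kn}\, dt.$$
Since $Q'_k/Q_k$ is nonnegative and continuous on $[0,1]$ (note $Q_k(1)=k\ne 0$), monotone convergence allows me to swap sum and integral, giving
$$T_k = -\int_0^1 \frac{Q'_k(t)}{Q_k(t)}\log(1-t^k)\, dt,$$
a convergent improper integral at $t=1$.

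The key observation is that $1-t^k=(1-t)Q_k(t)$, so $\log(1-t^k)=\log(1-t)+\log Q_k(t)$, and hence
$$T_k = -\int_0^1 \frac{Q'_k(t)}{Q_k(t)}\log(1-t)\, dt \;-\; \int_0^1 \frac{Q'_k(t)}{Q_k(t)}\log Q_k(t)\, dt.$$
The second integral is immediate by antidifferentiation: it equals $\frac12[\log^2 Q_k(t)]_0^1=\frac12\log^2 k$. For the first, $Q_k$ is a real polynomial of degree $k-1$ whose roots are the non-trivial $k$-th roots of unity $\omega_j=e^{2\pi i j/k}$, $1\le j\le k-1$, all lying on $\mathcal{U}'$. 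Thus Corollary~\ref{cor23} applies directly and yields
$$\int_0^1 \frac{Q'_k(t)}{Q_k(t)}\log(1-t)\, dt = -\frac{(k-1)\pi^2}{12}+\frac12\sum_{j=1}^{k-1}\log^2|1-\omega_j|+\frac12\sum_{j=1}^{k-1}(\Arg(1-\omega_j))^2.$$

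To finish, I would use $1-e^{2\pi ij/k}=2\sin(\pi j/k)\exp\!\left(i\bigl(\pi j/k-\pi/2\bigr)\right)$ to read off $|1-\omega_j|=2\sin(\pi j/k)$ and $\Arg(1-\omega_j)=\pi j/k-\pi/2$; the latter lies in $(-\pi/2,\pi/2)$, so the principal-argument hypothesis is met. The modulus sum is then exactly $\sum_{j=1}^{k-1}\log^2(2\sin(\pi j/k))$, while the angle sum reduces, by the elementary identities $\sum_{j=1}^{k-1}j=k(k-1)/2$ and $\sum_{j=1}^{k-1}j^2=(k-1)k(2k-1)/6$, to
$$\sum_{j=1}^{k-1}\left(\frac{\pi j}{k}-\frac{\pi}{2}\right)^2=\frac{\pi^2}{k^2}\cdot\frac{k(k-1)(k-2)}{12}=\frac{\pi^2(k-1)(k-2)}{12k}.$$
Substituting everything into $T_k$ and simplifying the $\pi^2$ coefficient via
$$\frac{k-1}{12}-\frac{(k-1)(k-2)}{24k}=\frac{(k-1)(k+2)}{24k}$$
produces exactly the claimed formula.

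The main obstacle is mostly bookkeeping: recognising that the $1-t^k=(1-t)Q_k(t)$ factorisation cleanly separates the integral into one piece tailor-made for Corollary~\ref{cor23} and a trivial log-square, and then keeping the elementary angle-sum computation correct. The genuine analytic work has already been done in Lemmas~\ref{lem21} and~\ref{lem24} and in Corollary~\ref{cor23}; the present theorem simply harvests them.
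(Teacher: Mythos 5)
Your proposal is correct and follows essentially the same route as the paper: Lemma~\ref{lem24} plus monotone convergence to get $T_k=-\int_0^1\frac{Q'_k(t)}{Q_k(t)}\log(1-t^k)\,dt$, the factorisation $1-t^k=(1-t)Q_k(t)$ to split off the exact piece $\frac12\log^2 k$, and Corollary~\ref{cor23} applied to the roots $e^{2\pi ij/k}$ for the remaining integral, with the same closing arithmetic. All the computations (the modulus and argument of $1-\omega_j$, the angle sum $\frac{\pi^2(k-1)(k-2)}{12k}$, and the final coefficient $\frac{(k-1)(k+2)}{24k}$) check out.
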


\begin{proof}
Again, using Lemma \ref{lem24} we have
\begin{equation*}
\log k-(H_{kn}-H_n)=\int_0^1\frac{Q'_k(t)}{Q_k(t)}t^{nk}\, dt,
\end{equation*}
where $Q_k$ is the polynomial $Q_k(t)=1+t+\cdots+t^{k-1}$. The functions of the sequence
$\ds\left( t\mapsto \frac{Q^\prime_k(t)t^{kn}}{nQ_k(t)}\right)_{n\geq1}$, are positive and continuous on $[0,1]$, so
$$
\int_0^1\frac{Q^\prime_k(t)}{Q_k(t)}\left(\sum_{n=1}^\infty\frac{t^{kn}}{n}\right)\, d t
=\sum_{n=1}^\infty\frac1n\int_0^1\frac{Q^\prime_k(t)}{Q_k(t)}t^{kn}\, d t,$$
that is
$$-\int_0^1\frac{Q^\prime_k(t)}{Q_k(t)}\log(1-t^k)\, d t=\sum_{n=1}^\infty\frac{\log k-(H_{kn}-H_n)}{n}=T_k.$$
Now, $\log(1-t^k)=\log(1-t)+\log Q_k(t)$, so
\begin{align*}
T_k&= -\int_0^1\frac{Q^\prime_k(t)}{Q_k(t)}\log(1-t)\, d t
-\int_0^1\frac{Q^\prime_k(t)}{Q_k(t)}\log (Q_k(t))\, d t\\
&=-\int_0^1\frac{Q^\prime_k(t)}{Q_k(t)}\log(1-t)\, d t-\left[\frac12\log^2Q_k(t)\right]_{t=0}^{t=1}.
\end{align*}
Finally, 
\begin{equation*}\label{E:eq6}
T_k=-\frac12\log^2k+J_k\quad \hbox{with}\quad J_k=-\int_0^1\frac{Q^\prime_k(t)}{Q_k(t)}\log(1-t)\, d t.\tag{6}
\end{equation*}

\smallskip\goodbreak

Now, let $\omega$ denote the $k$th root of unity : $\exp\left(\frac{2i\pi}{k}\right)$. Since $Q_k$ 
is a real polynomial of degree $k-1$ whose roots are $\{\omega^j:0< j<k\}$, 
the evaluation the integral $J_k$ can be done using Corollary \ref{cor23}, as follows : 

$$J_k  =\frac{(k-1)\pi^2}{12}-\frac12\sum_{j=1}^{k-1}\log^2\abs{1-\omega^j}
-\frac12\sum_{j=1}^{k-1}\Arg^2(1-\omega^j).$$
But, for $1\leq j< k$ we have 
$$1-\omega^j=2 \sin\left(\frac{j\pi}{k}\right)\cdot e^{i\left(\frac{ j\pi}{k}-\frac{\pi}{2}\right)}$$
consequently $\abs{1-\omega^j}=2 \sin\left(\frac{j\pi}{k}\right)$ and $\Arg(1-\omega^j)=\left(\frac{j}{k}-\frac12\right)\pi$. 
Therefore,
$$J_k=\frac{(k-1)\pi^2}{12}-\frac12\sum_{j=1}^{k-1}\log^2\left(2\sin\frac{j\pi}{k}\right)
-\frac{\pi^2}{2}\sum_{j=1}^{k-1}\left(\frac{j}{k}-\frac12\right)^2,$$

but
\begin{align*}
\sum_{j=1}^{k-1}\left(\frac{j}{k}-\frac12\right)^2&=
\frac{1}{k^2}\cdot\frac{(k-1)k(2k-1)}{6}-\frac{1}{k}\cdot\frac{(k-1)k}{2}+\frac{k-1}{4}\\
&=\frac{(k-1)(k-2)}{12k},
\end{align*}
hence,
\begin{equation*}\label{E:eq7}
J_k=\frac{(k-1)(k+2)\pi^2}{24k}-\frac12\sum_{j=1}^{k-1}\log^2\left(2\sin\frac{j\pi}{k}\right).\tag{7}
\end{equation*}
Clearly, the conclusion of the theorem follows from \eqref{E:eq6} and \eqref{E:eq7}.
\end{proof}

\smallskip

{\bf Examples.}  In particular,
\begin{align*}
\sum_{n=1}^\infty \frac{\log 2-(H_{2n}-H_n)}{n}&=\frac{1}{12}\pi^2-\log^22,\\
\sum_{n=1}^\infty \frac{\log 3-(H_{3n}-H_n)}{n}&=\frac{5}{36}\pi^2-\frac{3}{4}\log^23,\\
\sum_{n=1}^\infty \frac{\log 4-(H_{4n}-H_n)}{n}&=\frac{3}{16}\pi^2-\frac{11}{16}\log^24,\\
\sum_{n=1}^\infty \frac{\log 5-(H_{5n}-H_n)}{n}&=\frac{7}{30}\pi^2-\frac{5}{8}\log^25-\frac{1}{2}\log^2\left(\frac{1+\sqrt5}{2}\right),\\
\sum_{n=1}^\infty \frac{\log 6-(H_{6n}-H_n)}{n}&= \frac{5}{18}\pi^2-\frac{1}{2}\log^26-\frac{1}{4}\log^23-\frac{1}{2}\log^22.
\end{align*}

\smallskip

\begin{theorem}\label{th33} For an integer $k\geq 1$, let $U_k$ be defined by
$$
U_k=\sum_{n=1}^\infty(-1)^{n-1}\frac{H_{kn}}{n},
$$
then,
$$U_k=
\frac{(k^2+1)\pi^2}{24k}-\frac12\sum_{j=0}^{k-1}\log^2\left(2\sin\frac{(2j+1)\pi}{2k}\right).$$

\end{theorem}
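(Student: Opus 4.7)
The plan is to reduce $U_k$ to an integral of the form treated by Corollary~\ref{cor23}, following the same philosophy used in Theorems~\ref{th31} and \ref{th32}. The key input is the integral representation of $H_n/n$ provided by Lemma~\ref{lem25}.

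First I would use Lemma~\ref{lem25} to write $\frac{H_{kn}}{n}=k\int_\Delta\frac{y^{kn-1}}{1-x}\,dx\,dy$. Forming the partial sum $\sum_{n=1}^{N}(-1)^{n-1}\frac{H_{kn}}{n}$, interchanging with the double integral, summing the geometric series $\sum_{n=1}^{N}(-1)^{n-1}y^{kn-1}=y^{k-1}\frac{1-(-y^k)^N}{1+y^k}$, and performing the $x$-integration (which produces $-\log(1-y)$) gives
$$\sum_{n=1}^{N}(-1)^{n-1}\frac{H_{kn}}{n}=-k\int_0^1\frac{y^{k-1}\bigl(1-(-y^k)^N\bigr)\log(1-y)}{1+y^k}\,dy.$$
An elementary dominated-convergence estimate of the remainder (of exactly the type used in the proof of Theorem~\ref{th31}) lets me pass to the limit $N\to\infty$ and conclude
$$U_k=-\int_0^1\frac{P'(y)}{P(y)}\log(1-y)\,dy,\quad\text{where }P(y)=1+y^k.$$

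Now $P$ is a real polynomial of degree $k$ whose roots $\omega_j=\exp\bigl(i(2j+1)\pi/k\bigr)$, $0\leq j\leq k-1$, all lie on the unit circle and none of them equals $1$ (since $\omega_j^k=-1$). Corollary~\ref{cor23} therefore applies and yields
$$U_k=\frac{k\pi^2}{12}-\frac12\sum_{j=0}^{k-1}\log^2\abs{1-\omega_j}-\frac12\sum_{j=0}^{k-1}\Arg^2(1-\omega_j).$$

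To finish, the identity $1-e^{i\theta}=2\sin(\theta/2)\,e^{i(\theta-\pi)/2}$ applied at $\theta=(2j+1)\pi/k$ gives $\abs{1-\omega_j}=2\sin\frac{(2j+1)\pi}{2k}$ and $\Arg(1-\omega_j)=\frac{\pi}{2}\bigl(\frac{2j+1}{k}-1\bigr)$, the resulting argument lying in $(-\pi/2,\pi/2)$ so that it really is the principal determination. The elementary arithmetic identity $\sum_{j=0}^{k-1}(2j+1-k)^2=\frac{k(k^2-1)}{3}$ then turns the argument-squared contribution into $\frac{\pi^2(k^2-1)}{24k}$; combining with the $\frac{k\pi^2}{12}$ term produces exactly the announced coefficient $\frac{(k^2+1)\pi^2}{24k}$, and the claimed formula follows. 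The only real obstacle is careful bookkeeping of the signs, branches, and the symmetry of the roots $\omega_j$; no conceptual ingredient beyond the machinery already packaged in Corollary~\ref{cor23} is required.
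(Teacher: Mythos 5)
Your proposal is correct and follows essentially the same route as the paper: Lemma~\ref{lem25} to get the double-integral representation, summation of the geometric series with a vanishing remainder, reduction to $-\int_0^1\frac{ky^{k-1}}{1+y^k}\log(1-y)\,dy$, and then Corollary~\ref{cor23} applied to $P(y)=1+y^k$ with the same modulus/argument computation and the same arithmetic identity for the sum of squares. No substantive difference from the paper's argument.
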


\begin{proof}
Using Lemma \ref{lem25} we have
\begin{equation*}
\frac{H_{kn}}{n}=\int_\Delta\frac{ky^{kn-1}}{1-x}\, dx\,dy=\int_\Delta\frac{ky^{k-1}}{1-x}\, y^{k(n-1)}\, dx\,dy
\end{equation*}
where $\Delta=\{(x,y)\in\reel^2:0\leq x<y\leq 1\}$. Hence, for $m>1$  we have 
\begin{align*}
\sum_{n=1}^m(-1)^{n-1}\frac{H_{kn}}{n}
&=\int_\Delta\frac{ky^{k-1}}{1-x}\left(\sum_{n=1}^m(-y^k)^{n-1}\right)\, dx\,dy\\
&=\int_\Delta\frac{ky^{k-1}}{1-x}\cdot\frac{1-(-y^k)^m}{1+y^k}\, dx\,dy\\
&=\int_\Delta\frac{1}{1-x}\cdot\frac{ky^{k-1}}{1+y^k}\, dx\,dy+(-1)^mR_m,
\end{align*}
where
$$R_m=\int_\Delta\frac{ky^{k(m+1)-1}}{(1-x)(1+y^k)}\, dx\,dy$$
But,
$$0<R_m<\int_\Delta\frac{ky^{k(m+1)-1}}{1-x}\, dx\,dy=\frac{H_{k(m+1)}}{m+1},$$
therefore, $\ds\lim_{m\to\infty}R_m=0$. So, letting $m$ tend to $\infty$, we conclude that
\begin{equation*}
\lim_{m\to\infty} \sum_{n=1}^m(-1)^{n-1}\frac{H_{kn}}{n}=
\int_\Delta\frac{1}{1-x}\cdot\frac{ky^{k-1}}{1+y^k}\, dx\,dy,
\end{equation*}
and we arrive to the following conclusion :

\begin{align*}
U_k&= \int_\Delta\frac{ky^{k-1}}{(1-x)(1+y^k)}\, dx\,dy\\
&=\int_{y=0}^1\frac{ky^{k-1}}{(1-x)(1+y^k)}\left(\int_{x=0}^y\frac{dx}{1-x}\right)dy\\
&=-\int_0^1\frac{ky^{k-1}}{1+y^k}\log(1-y)\,dy. 
\end{align*}

\smallskip\goodbreak

Now, let $\omega=\exp(\frac{i\pi}{k})$ . Since $X^k+1$ 
is a real polynomial of degree $k$ whose roots are $\{\omega^{2j+1}:0\leq j<k\}$, 
the evaluation of $U_k$ can be done using Corollary \ref{cor23}, as follows : 

$$U_k  =\frac{k\pi^2}{12}-\frac12\sum_{j=0}^{k-1}\log^2\abs{1-\omega^{2j+1}}
-\frac12\sum_{j=0}^{k-1}\Arg^2(1-\omega^{2j+1}).$$
But, for $0\leq j< k$ we have 
$$1-\omega^{2j+1}=2 \sin\left(\frac{(2j+1)\pi}{2k}\right)\cdot e^{i(2j+1-k)\pi/(2k)}$$
consequently
$$\abs{1-\omega^{2j+1}}=2 \sin\left(\frac{(2j+1)\pi}{2k}\right)\quad\hbox{ and }\quad\Arg(1-\omega^{2j+1})=\frac{(2j+1-k)\pi}{2k}.$$

Therefore,
$$U_k=\frac{k\pi^2}{12}-\frac12\sum_{j=0}^{k-1}\log^2\left(2\sin\frac{(2j+1)\pi}{2k}\right)
-\frac{\pi^2}{2}\sum_{j=0}^{k-1}\left(\frac{2j+1-k}{2k}\right)^2,$$

But,

\begin{align*}
\sum_{j=0}^{k-1}\left(\frac{2j+1-k}{2k}\right)^2
&=\sum_{j=0}^{k-1}\left(\frac{j^2}{k^2}-\frac{(k-1)j}{k^2}+\frac{(k-1)^2}{4k^2}\right)\\
&=\frac{(k-1)(2k-1)}{6k}-\frac{(k-1)^2}{4k} =\frac{k^2-1}{12k}, 
\end{align*}

hence,
$$U_k=\frac{(k^2+1)\pi^2}{24k}-\frac12\sum_{j=0}^{k-1}\log^2\left(2\sin\frac{(2j+1)\pi}{2k}\right),$$

which is the desired conclusion.
\end{proof}

\smallskip

{\bf Examples.} In particular,
\begin{align*}
\sum_{n=1}^\infty(-1)^{n-1}\frac{H_n}{n}& =\frac{\pi^2}{12}-\frac{1}{2}\log^22,\\
\sum_{n=1}^\infty(-1)^{n-1}\frac{H_{2n}}{n}& =\frac{5\pi^2}{48}-\frac{1}{4}\log^22,\\
\sum_{n=1}^\infty(-1)^{n-1}\frac{H_{3n}}{n}& =\frac{5\pi^2}{36}-\frac{1}{2}\log^22,\\
\sum_{n=1}^\infty(-1)^{n-1}\frac{H_{4n}}{n}& =\frac{17\pi^2}{96}-\frac{1}{8}\log^22-\frac{1}{2}\log^2(1+\sqrt2),\\
\sum_{n=1}^\infty(-1)^{n-1}\frac{H_{5n}}{n}& =\frac{13\pi^2}{60}-\frac{1}{2}\log^22-2\log^2\left(\frac{1+\sqrt5}{2}\right),\\
\sum_{n=1}^\infty(-1)^{n-1}\frac{H_{6n}}{n}& =\frac{37\pi^2}{144}-\frac{1}{4}\log^22-\frac12\log^2(2+\sqrt3).
\end{align*}

\smallskip

{\bf Conclusion.} In this paper, we have determined the sum of several families of numerical series related
 to harmonic numbers using very simple techniques from classical and complex analysis. We think
 that some of these results and techniques are important in their own right.

\smallskip


\begin{thebibliography}{9}
\setlength{\itemsep}{5pt}

\bibitem{ahl}
Ahlfors,~L. V.,  
Complex Analysis, third edition, {\em McGraw-Hill, Inc.\/} 
(1979).

\bibitem{bbp}
Bailey,~D. H.,  Borwein,~J. M., \textsc{and} Plouffe,~S., On the rapid computation of
various polylogarithmic constants. {\em Mathematics of Computation.\/} 
~{\bf 66}
(1997). 903--913.

\bibitem{exp}
Bailey,~D. H., Borwein,~J. M.,  Calkin,~N., Girgensohn, R., Luke,~R.,\textsc{and} Moll V. H., 
Experimental Mathematics In Action. 
{\em A. K. Peters.\/} 
(2007)

\bibitem{fur}
Furdui,~O.,  
Harmonic Series, 
{\em Problems And Solutions Section, Siam Journals,} 

[ONLINE : \texttt{http://WWW.siam.org/journals/categories/06-007.php}].

\bibitem{kou1}
Kouba,~O.,  
The Sum of the "Harmonic Series" Is Not Enough, 
{\em Problems And Solutions Section, Siam Journals,} 

[ONLINE : \texttt{http://WWW.siam.org/journals/problems/downloadfiles/06-007s.pdf}].

\bibitem{kou2}
Kouba,~O.,
Problem 11499,
{\em  American Mathematical Monthly,}
~{\bf 117}, no. 4  (April 2010) pg. 317.
\end{thebibliography}
\end{document}